
\documentclass{article}
\usepackage{amsfonts}
\usepackage{amsmath}
\usepackage{natbib}
\usepackage{authblk}


\newtheorem{theorem}{Theorem}

\newtheorem{proposition}[theorem]{Proposition}

\newenvironment{proof}[1][Proof]{\noindent\textbf{#1.} }{\ \rule{0.5em}{0.5em}}

\begin{document}

\title{A note on the relation between one--step, outcome regression and IPW--type estimators of
parameters with the mixed bias property}
\author[1]{Andrea Rotnitzky}
\author[2]{Ezequiel Smucler}
\author[3]{James M. Robins}

\affil[1]{University of Washington, Department of Biostatistics}
\affil[2]{Glovo}
\affil[3]{Harvard T.H. Chan School of Public Health}

\maketitle

\begin{abstract}
\cite{bruns2025augmented}\ established an algebraic identity between the
one-step estimator and a specific outcome regression--type estimator for a
class of parameters that forms a strict subset of the class introduced in 
\cite{chernozhukovclass}, assuming both nuisance functions are estimated as
linear combinations of given features. They conjectured that this identity
extends to the broader mixed bias class introduced in \cite{characterization}%
. In this note, we prove their conjecture and further extend the result to
allow one of the nuisance estimators to be non-linear. We also relate these
findings to the work of \cite{robins2007comment}, who established other identities
linking one-step estimators to outcome regression--type and IPW--type estimators.
\end{abstract}

\section{Introduction}

\cite{bruns2025augmented} presented an algebraic identity between one-step
estimators and a specific outcome regression-type estimator for a strict
subset of the parameter class introduced in \cite{chernozhukovclass}. Their results assumed that the one-step estimator uses estimators of the nuisance parameters that are arbitrary linear combinations of features. They conjectured that
their finding could be extended to the broader class of parameters with the
mixed bias property, introduced in \cite{characterization}. In this note, we
prove this conjecture. In doing so, we additionally extend the results in 
\cite{bruns2025augmented} to allow for possibly non-linear estimators of one
of the nuisance parameters. The mixed bias class is a strict superset of
both the class in \cite{chernozhukovclass} and \cite{jamieclass}. See \cite%
{characterization} for a discussion of the relation between these classes of
parameters, and examples covering many important parameters of interest in
causal inference and missing data.

A foundational precedent for this line of work was provided by \cite{robins2007comment}, who studied estimation of the mean outcome under missing-at-random
assumptions, equivalently, the mean of a counterfactual outcome under a point intervention assuming no unmeasured confounders. They showed that, with suitably chosen nuisance estimators, the
one-step estimator can collapse algebraically to either the outcome
regression estimator or the inverse probability weighted (IPW) estimator. In
the present note, we generalize these identities to the mixed bias framework
and also clarify how the new results in \cite{bruns2025augmented} relate to
those of \cite{robins2007comment}.

The remainder of the note is organized as follows. Section \ref{sec:mixed} reviews the
mixed bias class. Section \ref{sec:robins} summarizes and extends the identities in
\cite{robins2007comment} to the mixed bias framework. Section \ref{sec:bruns} extends the
identities in \cite{bruns2025augmented} to the mixed bias framework,
allowing for one of the nuisance estimators to be non-linear. The section
also clarifies the connection with the identities in \cite{robins2007comment}.

\section{The mixed bias class}\label{sec:mixed}

Suppose we observe a random sample of $n$ i.i.d copies of a random vector $O$
with unknown law $P_{0},$ and let $Z$ be a subvector of $O$. We assume that $%
P_{0}$ lies in non-parametric model $\mathcal{M}$, meaning that the tangent
space at each $P\in \mathcal{M}$ coincides with $L^{2}(P).$ Our goal is to
estimate a scalar parameter $\chi \left( P_{0}\right) $ based on the sample.
We assume that the functional $P\mapsto \chi \left( P\right) $ is pathwise
differentiable for all $P$ in $\mathcal{M}$. Because the model $\mathcal{M}$
is non-parametric, $\chi $ admits a unique influence function $\chi _{P}^{1}$
at each $P.$  See Chapter 25 of \cite{van2000} for the definitions of
pathwise differentiable parameters, tangent spaces and influence functions.

The parameter $\chi $ is said to have the mixed bias property over model $%
\mathcal{M}$ if there exists a statistic $S_{ab}=s_{ab}(O)$ such that for
all $P,P^{\prime }\in \mathcal{M}$, there exist functions $a_{P},b_{P}\in
L^{2}(P_{Z})$, $a_{P^{\prime }},b_{P^{\prime }}\in L^{2}(P_{Z})$ such that 
\begin{equation*}
\chi (P^{\prime })-\chi (P)+E_{P}\left\{ \chi _{P^{\prime }}^{1}(O)\right\}
=E_{P}\left[ S_{ab}\left\{ a_{P^{\prime }}(Z)-a_{P}(Z)\right\} \left\{
b_{P^{\prime }}(Z)-b_{P}(Z)\right\} \right] .
\end{equation*}%
Let $\mathcal{A}=\{a_{P}:P\in \mathcal{M}\}$ and $\mathcal{B}=\{b_{P}:P\in 
\mathcal{M}\}$. Theorem 1 of \cite{characterization}, established that,
under mild regularity conditions, $\chi $ has the mixed bias property if and
only if, there exist linear maps $h\in \mathcal{A}\rightarrow m_{1}(O,h)$
and $h\in \mathcal{B}\rightarrow m_{2}(O,h)$ and a statistic $S_{0}=s_{0}(O)$
such that for all $P\in \mathcal{M}$, 
\begin{equation*}
\chi
_{P}^{1}(O)=S_{ab}a_{P}(Z)b_{P}(Z)+m_{1}(O,a_{P})+m_{2}(O,b_{P})+S_{0}-\chi
(P).
\end{equation*}%
Moreover, Theorem 2 of \cite{characterization} established that if $\chi $
has the mixed bias property then, under mild regularity conditions, it holds
that

\begin{enumerate}
\item 
\begin{align*}
\chi(P)&=E_{P}\left\lbrace m_{1}(O,b_{P})\right\rbrace + E_{P}(S_0) \\
&=E_{P}\left\lbrace m_{2}(O,b_{P})\right\rbrace+ E_{P}(S_0) \\
&=- E_{P}\left\lbrace S_{ab}a_{P}(Z)b_{P}(Z)\right\rbrace + E_{P}(S_0)
\end{align*}

\item If $E(S_{ab}\mid Z)\neq 0$ almost surely under $P$ then 
\begin{equation*}
a_{P}(Z)=-r_{2,P}(Z)/E(S_{ab}\mid Z),
\end{equation*}
where $r_{2,P}(Z)$ is the Riesz representer of the map $h\to E_{P}\lbrace
m_{2}(O,h)\rbrace$. 

\item If $E(S_{ab}\mid Z)\neq 0$ almost surely under $P$ then 
\begin{equation*}
b_{P}(Z)=-r_{1,P}(Z)/E(S_{ab}\mid Z),
\end{equation*}
where $r_{1,P}(Z)$ is the Riesz representer of the map $h\to E_{P}\lbrace
m_{1}(O,h)\rbrace$.
\end{enumerate}

In what follows, we will assume that $E(S_{ab}\mid Z)\neq 0$ almost surely
under $P$, for all $P\in \mathcal{M}$. Moreover, to simplify the
presentation, and without loss of generality, we will assume that $S_0=0$.

The class of parameters studied in \cite{bruns2025augmented} coincides with
the subset of the mixed bias class of parameters that have $S_{ab}=-1$, $%
m_2(O,b)=Yb(Z)$ and $a_{P}(Z)=E_{P}(Y\mid Z)$, where $Y$ is a coordinate of $O$ that is not a part of $Z$.

For any functions $a$ and $b$ we define 
\begin{equation*}
\widehat{\chi }_{a,b}:=\mathbb{P}_{n}\left\{ S_{ab}a(Z)b(Z)\right\} +\mathbb{%
P}_{n}\left\{ m_{1}(O,a)\right\} +\mathbb{P}_{n}\left\{ m_{2}(O,b)\right\} 
\end{equation*}%
where $\mathbb{P}_{n}$ in the empirical mean operator. The estimator $%
\widehat{\chi }_{\widehat{a},\widehat{b}}$, where $\widehat{a}$ and $\widehat{b}$ are estimators of
the nuisance functions $a_{P}$ and $b_{P}$, is the so-called one-step estimator of $%
\chi (P)$. The mixed bias property of the
functional $\chi \left( P\right) $ implies that  $\widehat{\chi }_{\widehat{a%
},\widehat{b}}$ is doubly-robust consistent: it converges in probability to $%
\chi \left( P_{0}\right) $ if either $\widehat{a}$ consistently estimates $%
a_{P_{0}}$ or $\widehat{b}$ consistent estimates $b_{P_{0}},$ but not
necessarily both. For the class of parameters studied in \cite%
{bruns2025augmented}, $\widehat{\chi }_{\widehat{a},\widehat{b}}$ coincides
with the estimators they termed \textit{augmented estimators}.

By the linearity of $m_{2}$, 
\begin{equation*}
\widehat{\chi }_{a,0}=\mathbb{P}_{n}\{m_{1}(O,a)\}.
\end{equation*}
When $\chi $ belongs to the class of parameters considered in \cite%
{bruns2025augmented}, $\widehat{\chi }_{\widehat{a},0}$ coincides with what
the authors termed \textit{outcome regression} estimators. To maintain
connection with their work, we will refer to estimators of the form $%
\widehat{\chi }_{\widehat{a},0}$ as \textit{outcome regression-type
estimators}. Similarly, we will refer to $\widehat{\chi }_{0,\widehat{b}}$
as an \textit{IPW-type estimator}. This terminology is motivated by the fact
that, in the special case where $\chi (P)$ identifies the mean of a counterfactual outcome under a point intervention assuming no unmeasured confounders, $\widehat{\chi }%
_{0,\widehat{b}}$ reduces to the inverse probability weighted (IPW)
estimator of $\chi \left( P\right) $ with $\widehat{b}(Z)$ serving as an
estimator of $b_{P}(Z)=A/P(A\mid L)$, where $Z=(A,L),$ $A$ is a binary
treatment indicator and $L$ is the vector of confounders.

\section{The algebraic identities in \cite{robins2007comment}}\label{sec:robins}

\cite{robins2007comment} showed that, in the estimation of the counterfactual outcome
mean, the one-step estimator $\widehat{\chi }_{\widehat{a},\widehat{b}}$
reduces to the outcome regression estimator $\widehat{\chi }_{\widehat{a},0}$
when $\widehat{a}$ satisfies a certain estimating equation. They referred to
such estimators as regression double robust estimators. Similarly, $\widehat{%
\chi }_{\widehat{a},\widehat{b}}$ reduces to an IPW estimator $\widehat{\chi 
}_{0,\widehat{b}}$ when $\widehat{b}$ satisfies a certain estimating
equation, yielding to what they termed Horvitz-Thompson double robust
estimators. In their setting the observed data vector in each sample unit is 
$O=\left( A,L,Y\right) ,$ with $A$ a binary treatment, $L$ a vector of
always-observed covariates and $Y$ the factual outcome. Letting $Z=\left(
A,L\right) \,\ $and under the standard assumptions of consistency,
positivity and no-unmeasured confounding, the mean of the outcome when
treatment $A$ is set to 1 in the entire population is equal to $\chi \left(
P\right) $ where $m_{1}\left( O,a\right) =a\left( A=1,L\right) ,m_{2}\left(
O,b\right) =b\left( Z\right) Y$ with nuisance functions $a_{P}\left(
Z\right) =E_{P}\left( Y|A,L\right) $ and $b_{P}\left( Z\right) =AY/P\left(
A=1|L\right) .$ 

\cite{robins2007comment} key identities extend to the general mixed bias class of
parameters as follows:

\begin{enumerate}
\item If $\widehat{a}$ satisfies 
\begin{equation}
\mathbb{P}_{n}\left\{ S_{ab}\widehat{a}(Z)\widehat{b}(Z)+m_{2}(O,\widehat{b}%
)\right\} =0  \label{l1}
\end{equation}%
then $\widehat{\chi }_{\widehat{a},\widehat{b}}=\mathbb{P}_{n}\left\{
m_{1}(O,\widehat{a})\right\} =\widehat{\chi }_{\widehat{a},0}.$ So, by
choosing the estimator $\widehat{a}$ of the nuisance $a_{P}$ such that it
satisfies the equation in the last display we ensure that the outcome
regression-type estimator $\widehat{\chi }_{\widehat{a},0}$ is algebraically
identical to the one-step estimator $\widehat{\chi }_{\widehat{a},\widehat{b}%
}$

\item If $\widehat{b}$ satisfies 
\begin{equation}
\mathbb{P}_{n}\left\{ S_{ab}\widehat{a}(Z)\widehat{b}(Z)+m_{1}(O,\widehat{a}%
)\right\} =0  \label{l2}
\end{equation}%
then $\widehat{\chi }_{\widehat{a},\widehat{b}}=\mathbb{P}_{n}\left\{
m_{2}(O,\widehat{b})\right\} =\widehat{\chi }_{0,\widehat{b}}.$ So, by
choosing the estimator $\widehat{b}$ of the nuisance $b_{P}$ such that it
satisfies the equation in the last display we ensure that the IPW-type
estimator $\widehat{\chi }_{0,\widehat{b}}$ is algebraically identical to
the one-step estimator $\widehat{\chi }_{\widehat{a},\widehat{b}}.$
\end{enumerate}

The proof of these results is simply a consequence of the fact that $%
\widehat{\chi }_{\widehat{a},\widehat{b}}$ is a sum of three terms; if the
sum of two of these three terms is exactly zero then $\widehat{\chi }_{%
\widehat{a},\widehat{b}}$ must equal the remaining term. \cite{robins2007comment}
discussed several ways of constructing, not necessarily linear, estimators $%
\widehat{a}$ satisfying \eqref{l1}, and estimators $\widehat{b}$ satisfying %
\eqref{l2}.  

Of course, if $\widehat{a}$ and $\widehat{b}$ satisfy simultaneously both
equations then the one-step estimator $\widehat{\chi }_{\widehat{a},\widehat{%
b}}$ is simultaneously an outcome-regression type estimator $\widehat{\chi }%
_{\widehat{a},0}$ and IPW-type estimator $\widehat{\chi }_{0,\widehat{b}}$.
It is possible to construct $\widehat{a}$ and $\widehat{b},$ linear in $\phi
\left( Z\right) ,$ where 
\begin{equation*}
\phi \left( Z\right) :=[\phi _{1}\left( Z\right) ,...,\phi _{d}\left(
Z\right) ]^{\top }
\end{equation*}%
is a vector of given features, that satisfy \eqref{l1} and \eqref{l2}
simultaneoulsy. To see this, note that for a linear estimator $\widehat{a}%
(Z)=\widehat{\beta }^{T}\phi \left( Z\right) $ condition \eqref{l1} holds
whenever $\widehat{\beta }$ solves the equation 
\begin{equation}
\mathbb{P}_{n}\left\{ S_{ab}\widehat{b}(Z)\phi \left( Z\right) ^{T}\beta
+m_{2}(O,\widehat{b})\right\} =0.  \label{main-alpha-eq}
\end{equation}%
Then, by linearity of $m_{2},$ when $\widehat{b}(Z)=\widehat{\alpha }%
^{T}\phi \left( Z\right) $ is also linear in $\phi \left( Z\right) ,$
equation \eqref{main-alpha-eq} is the same as 
\begin{equation*}
\widehat{\alpha }^{T}\mathbb{P}_{n}\left\{ S_{ab}\phi \left( Z\right) \phi
\left( Z\right) ^{T}\beta +m_{2}(O,\phi )\right\} =0
\end{equation*}

Thus, in particular, 
\begin{equation*}
\widehat{\beta }_{OLS}:=\mathbb{P}_{n}\left\{ -S_{ab}\phi \left( Z\right)
\phi \left( Z\right) ^{T}\right\} ^{-1}\mathbb{P}_{n}\left\{ m_{2}(O,\phi
)\right\} 
\end{equation*}%
satisfies equation \eqref{main-alpha-eq}. We call $\widehat{\beta }_{OLS}$
the OLS type estimator of $\beta $ because in the special case in which $%
m_{2}(O,b)=Yb$ and $S_{ab}=-1$, $\widehat{\beta }_{OLS}$ is the ordinary
least squares estimator of $\beta $ in the regression of $Y$ on predictors $%
\phi (Z).$ It follows that $\widehat{\chi }_{\widehat{a}_{OLS},\widehat{b}}$ 
$=\widehat{\chi }_{\widehat{a}_{OLS},0}$ $\ $where $\widehat{a}_{OLS}(Z):=%
\widehat{\beta }_{OLS}^{T}\phi \left( Z\right) .$ If, additionally, $%
\widehat{\alpha }=\widehat{\alpha }_{OLS}:=\mathbb{P}_{n}\left\{ -S_{ab}\phi
\left( Z\right) \phi \left( Z\right) ^{T}\right\} ^{-1}\mathbb{P}_{n}\left\{
m_{1}(O,\phi )\right\} ,$ then we have that $\widehat{\chi }_{\widehat{a}%
_{OLS},\widehat{b}_{OLS}}=\widehat{\chi }_{\widehat{a}_{OLS},0}=\widehat{%
\chi }_{0,\widehat{b}_{OLS}}$ where $\widehat{b}_{OLS}(Z):=\widehat{\alpha }%
_{OLS}^{T}\phi \left( Z\right) .$ 

\section{The algebraic identities in \cite{bruns2025augmented}}\label{sec:bruns}


Let $\widehat{b}$ be \textbf{any} estimator of $b_{P}$ (not necessarily
linear) and let $\widetilde{a}\left( Z\right) =\widetilde{\beta }^{T}\phi
\left( Z\right) $ be an estimator of $a_{P}$, where $\widetilde{\beta }$ is
any vector that solves \eqref{main-alpha-eq}.\ Then, the following hold: 
\begin{align}
\mathbb{P}_{n}\{m_{2}(O,\widehat{b})\}& =\mathbb{P}_{n}\{-S_{ab}\widehat{b}%
(Z)\widetilde{a}(Z)\}  \label{eq:basic} \\
& =\widehat{\Phi }_{\widehat{b}}^{\top }\widetilde{\beta },
\label{eq:linear_transform}
\end{align}%
where $\widehat{\Phi }_{\widehat{b}}=\mathbb{P}_{n}\{-S_{ab}\widehat{b}%
(Z)\phi (Z)\}$. The identity in \eqref{eq:linear_transform} shows that any
estimator of $\chi $ of the form $\mathbb{P}_{n}\left\{ m_{2}(O,\widehat{b}%
)\right\} $, for an arbitrary $\widehat{b}$, can be expressed as a linear
function of the vector $\widehat{\Phi }_{\widehat{b}}$.

Proposition 3.1 of \cite{bruns2025augmented} is precisely the identity %
\eqref{eq:linear_transform} for the special case in which $m_{2}(O,b)=Yb,$ $%
S_{ab}=-1,$ $\widehat{b}(Z)=\widehat{\alpha }^{T}\phi \left( Z\right) $ with 
$\widehat{\alpha }$ arbitrary and $\widetilde{a}=\widehat{a}_{OLS}$ as
defined in the preceding section. 

We are now ready to establish the main result of this note, which extends
Proposition 3.2 of \cite{bruns2025augmented} to the mixed bias class of
parameters, allowing for arbitrary estimators of $\widehat{b}$.

\begin{proposition}\label{prop:main}
Let $\widehat{\beta }^{\top }\in \mathbb{R}^{p}$ be \textbf{any vector} and 
\begin{equation*}
\widehat{a}(Z):=\widehat{\beta }^{\top }\phi (Z).
\end{equation*}%
and let $\widehat{b}$ be \textbf{any} estimator of $b_{P}$. Let 
\begin{equation*}
\widetilde{a}_{aug}\left( Z\right) :=\widetilde{\beta }_{aug}^{\top }\phi (Z)
\end{equation*}%
where%
\begin{equation*}
\widetilde{\beta }_{aug,j}:=(1-\gamma _{j})\widehat{\beta }_{j}+\gamma _{j}%
\widetilde{\beta }_{j},
\end{equation*}%
with $\widetilde{\beta }_{j}$ the $j^{th}$ entry of a vector $\widetilde{%
\beta }$ that satisfies equation \eqref{main-alpha-eq} and 
\begin{equation*}
\gamma _{j}:=\frac{\mathbb{P}_{n}\left\{ -S_{ab}\widehat{b}(Z)\phi
_{j}(Z)\right\} }{\mathbb{P}_{n}\left\{ m_{1}(O,\phi _{j})\right\} }.
\end{equation*}%
Then, 
\begin{equation}
\widehat{\chi }_{\widehat{a},\widehat{b}}=\widehat{\chi }_{\widetilde{a}%
_{aug},0}  \label{eq:main_aug}
\end{equation}
\end{proposition}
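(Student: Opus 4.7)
The plan is to begin from the right-hand side $\widehat{\chi}_{\widetilde{a}_{aug},0} = \mathbb{P}_n\{m_1(O,\widetilde{a}_{aug})\}$ and reconstruct, term by term, the three summands that define $\widehat{\chi}_{\widehat{a},\widehat{b}}$. The tools available are the linearity of $m_1$ in its second argument, the definition of $\gamma_j$, and identity \eqref{eq:basic} applied to $\widetilde{a}(Z) = \widetilde{\beta}^\top \phi(Z)$, which is licensed because $\widetilde{\beta}$ satisfies equation \eqref{main-alpha-eq}.

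First, I would expand $\mathbb{P}_n\{m_1(O,\widetilde{a}_{aug})\}$ using linearity of $m_1$ together with $\widetilde{\beta}_{aug,j} = (1-\gamma_j)\widehat{\beta}_j + \gamma_j \widetilde{\beta}_j$:
$$\mathbb{P}_n\{m_1(O,\widetilde{a}_{aug})\} = \sum_{j=1}^{d} \widehat{\beta}_j \, \mathbb{P}_n\{m_1(O,\phi_j)\} + \sum_{j=1}^{d} \gamma_j (\widetilde{\beta}_j - \widehat{\beta}_j) \, \mathbb{P}_n\{m_1(O,\phi_j)\}.$$
By linearity of $m_1$, the first sum recombines into $\mathbb{P}_n\{m_1(O,\widehat{a})\}$, which is already one of the three terms composing $\widehat{\chi}_{\widehat{a},\widehat{b}}$.

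The key step is then the algebraic cancellation in the remainder. By the definition of $\gamma_j$, the product $\gamma_j \, \mathbb{P}_n\{m_1(O,\phi_j)\}$ equals $\mathbb{P}_n\{-S_{ab}\widehat{b}(Z)\phi_j(Z)\}$, so pulling the scalars $(\widetilde{\beta}_j - \widehat{\beta}_j)$ inside the empirical mean and summing over $j$ yields
$$\sum_{j=1}^{d} \gamma_j (\widetilde{\beta}_j - \widehat{\beta}_j) \, \mathbb{P}_n\{m_1(O,\phi_j)\} = \mathbb{P}_n\{-S_{ab}\widehat{b}(Z)\widetilde{a}(Z)\} - \mathbb{P}_n\{-S_{ab}\widehat{b}(Z)\widehat{a}(Z)\}.$$
Applying identity \eqref{eq:basic} to the first term replaces it by $\mathbb{P}_n\{m_2(O,\widehat{b})\}$, while the second term becomes $\mathbb{P}_n\{S_{ab}\widehat{a}(Z)\widehat{b}(Z)\}$ after sign flip. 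Collecting everything gives exactly the three summands defining $\widehat{\chi}_{\widehat{a},\widehat{b}}$.

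The main obstacle here is organizational rather than conceptual: the argument is essentially bookkeeping, chaining linearity of $m_1$, the defining equation for $\gamma_j$, and identity \eqref{eq:basic}. The one subtlety worth flagging is well-definedness of $\gamma_j$ when $\mathbb{P}_n\{m_1(O,\phi_j)\} = 0$; this is handled by noting that $\gamma_j$ enters the argument only through the product $\gamma_j \mathbb{P}_n\{m_1(O,\phi_j)\}$, which by construction equals $\mathbb{P}_n\{-S_{ab}\widehat{b}(Z)\phi_j(Z)\}$ and is always well-defined, so the identity remains valid up to the convention of interpreting this product directly.
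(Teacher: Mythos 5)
Your proposal is correct and follows essentially the same route as the paper's proof: expand $\mathbb{P}_n\{m_1(O,\widetilde{a}_{aug})\}$ by linearity of $m_1$, use the definition of $\gamma_j$ to convert the correction term into $\mathbb{P}_n\{-S_{ab}\widehat{b}\,\widetilde{a}\}-\mathbb{P}_n\{-S_{ab}\widehat{b}\,\widehat{a}\}$, and invoke identity \eqref{eq:basic} (i.e., equation \eqref{main-alpha-eq}) to recover the three summands of $\widehat{\chi}_{\widehat{a},\widehat{b}}$. Your added remark on well-definedness of $\gamma_j$ when $\mathbb{P}_n\{m_1(O,\phi_j)\}=0$ is a sensible refinement not addressed in the paper, but it does not change the argument.
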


\begin{proof}
\begin{align*}
\widehat{\chi }_{\widetilde{a}_{aug},0}& =\mathbb{P}_{n}\left\{ m_{1}(O,%
\widetilde{a}_{aug})\right\}  \\
& =\mathbb{P}_{n}\left\{ m_{1}(O,\phi )^{\top }\right\} \widetilde{\beta }%
_{aug} \\
& =\sum\limits_{j=1}^{p}\mathbb{P}_{n}\left\{ m_{1}(O,\phi _{j})(1-\gamma
_{j})\widehat{\beta }_{j}+m_{1}(O,\phi _{j})\gamma _{j}\widetilde{\beta }%
_{j}\right\}  \\
& =\mathbb{P}_{n}\left[ \sum\limits_{j=1}^{p}m_{1}(O,\phi _{j})\widehat{%
\beta }_{j}\right] +\sum\limits_{j=1}^{p}\mathbb{P}_{n}\left\{ m_{1}(O,\phi
_{j})\frac{\mathbb{P}_{n}\left\{ S_{ab}\widehat{b}(Z)\phi _{j}(Z)\right\} }{%
\mathbb{P}_{n}\left\{ m_{1}(O,\phi _{j})\right\} }\left( \widehat{\beta }%
_{j}-\widetilde{\beta }_{j}\right) \right\}  \\
& =\mathbb{P}_{n}\left\{ m_{1}(O,\widehat{a})\right\} +\sum\limits_{j=1}^{p}%
\mathbb{P}_{n}\left\{ S_{ab}\widehat{b}(Z)\phi _{j}(Z)\right\} \left( 
\widehat{\beta }_{j}-\widetilde{\beta }_{j}\right)  \\
& =\mathbb{P}_{n}\left\{ m_{1}(O,\widehat{a})\right\} +\mathbb{P}_{n}\left\{
S_{ab}\widehat{a}(Z)\widehat{b}(Z)\right\} -\mathbb{P}_{n}\left\{ S_{ab}%
\widetilde{a}(Z)\widehat{b}(Z)\right\}  \\
& =\mathbb{P}_{n}\left\{ m_{1}(O,\widehat{a})\right\} +\mathbb{P}_{n}\left\{
S_{ab}\widehat{a}(Z)\widehat{b}(Z)\right\} +\mathbb{P}_{n}\left\{ m_{2}(O,%
\widehat{b})\right\}  \\
& =\widehat{\chi }_{\widehat{a},\widehat{b}}
\end{align*}%
where the next to last equality holds by \eqref{eq:basic}.
\end{proof}

In the special case where $m_{2}(O,b)=Yb$, $S_{ab}=-1$, and $\widehat{b}$ is
linear in $\phi (Z)$, equation \eqref{eq:main_aug} reduces precisely to the
conclusion of Proposition 3.2 of \cite{bruns2025augmented}.

Proposition \ref{prop:main} is noteworthy because it characterizes the one-step
estimator $\widehat{\chi }_{\widehat{a},\widehat{b}}$ constructed with
arbitrary linear estimators $\widehat{a}$ and $\widehat{b}$ as a particular
outcome regression-type estimator $\widehat{\chi }_{\widetilde{a}_{aug},0}$.
By symmetry, one can analogously deduce that $\widehat{\chi }_{\widehat{a},%
\widehat{b}}$ is equal to an IPW-type estimator $\widehat{\chi }_{0,%
\widetilde{b}_{aug}}$ for some $\widetilde{b}_{aug}$ defined analogously to $%
\widetilde{a}_{aug},$ by interchanging the roles of $\widehat{a}$ and $%
\widehat{b}$ and swapping $m_{1}$ with $m_{2}.$ In particular, one obtains
direct characterizations of one-step estimators that use linear nuisance
estimators  with coefficients estimated by Ridge regression, or by $\ell _{1}
$ or $\ell _{\infty }$ regularization, as specific instances of this general
form. \cite{bruns2025augmented} derived explicit expressions for $\widetilde{%
a}_{aug}$ in these cases for the parameter class they studied; these
formulas extend immediately to the general class of mixed bias parameters,
and we therefore omit the derivations here. While \cite{bruns2025augmented}
argued that this characterization should facilitate the study of asymptotic
properties of one-step estimators, it remains unclear to us how this
argument can be fully carried out.  

Finally, we conclude this note highlighting the connection between
Proposition \ref{prop:main} and the results presented in Section \ref{sec:robins}. The coefficients 
$\gamma _{j}$ can be interpreted as quantifying the imbalance of the
estimator $\widehat{b}$ relative to the covariate functions $\phi _{j}.$
Perfectly balanced estimators $\widehat{b}$ correspond to $\gamma _{j}=1$
for all $j=1,...,d$, in which case $\widetilde{a}_{aug}=\widetilde{a}.$
Moreover, perfectly balanced estimators $\widehat{b}$ satisfy the equation %
\eqref{l2} for any $\widehat{a}$ linear in $\phi \left( Z\right) $. Hence,
in this setting, $\widehat{\chi }_{\widehat{a},\widehat{b}}=\widehat{\chi }%
_{0,\widehat{b}}$ is the same for all $\widehat{a}$ linear in $\phi \left(
Z\right) .$ By Proposition \ref{prop:main} it then follows that $\widehat{\chi }_{%
\widehat{a},\widehat{b}}=\widehat{\chi }_{0,\widehat{b}}=\widehat{\chi }_{%
\widetilde{a},0},$ reproducing the result in Section \ref{sec:robins} that an outcome
regression-type coincides with an IPW-type estimator whenever equations %
\eqref{l1} and \eqref{l2} both hold, since by definition $\widetilde{a}$
satisfies equation \eqref{l1} with $\widetilde{a}$ in place of $\widehat{a}.$

\section{Acknowledgements}
Andrea Rotnitzky was funded by NIH grants UM1 AI068635, R01HL137808 and R37
Al029168.

\bibliography{augmented}
\bibliographystyle{apalike}
\end{document}